\documentclass[11pt]{amsart}
\usepackage{amssymb,amsmath,txfonts}
\usepackage{hyperref}
\newtheorem{theorem}{Theorem}

\newtheorem{remark}{Remark}
\newtheorem{claim}{Claim}
\newtheorem*{definition}{Definition}

\def\Xint#1{\mathchoice
  {\XXint\displaystyle\textstyle{#1}}%
  {\XXint\textstyle\scriptstyle{#1}}%
  {\XXint\scriptstyle\scriptscriptstyle{#1}}%
  {\XXint\scriptscriptstyle\scriptscriptstyle{#1}}%
  \!\int}
\def\XXint#1#2#3{{\setbox0=\hbox{$#1{#2#3}{\int}$}
  \vcenter{\hbox{$#2#3$}}\kern-.5\wd0}}

\def\dashint{\Xint-}

\author{Gang Liu}
\address{Department of Mathematics\\University of Minnesota\\Minneapolis, MN 55455}
\email{liuxx895@math.umn.edu}
\title[Rigidity of volume entropy]{\bf A short proof to the rigidity of volume entropy}
\date{}
\begin{document}

\begin{abstract}In this note we give a short proof to the rigidity of volume entropy.
The result says that for a closed manifold with Ricci curvature bounded from below,
if the universal cover has maximal volume entropy, then it is the space form.
This theorem was first proved by F. Ledrappier and X. Wang in [1].
\end{abstract}
\maketitle

\begin{definition}{ For a complete Riemannian manifold $M$, define the volume entropy $v$ of $M$ as
$$v(M) = \overline{\lim\limits_{r \to \infty}}\frac{\ln vol B_{M}(x, r)}{r}$$ where $B_{M}(x, r)$ is the geodesic
 ball of radius $r$ centered at $x$ in $M$. }
\end{definition}
 We are going to prove the following theorem due to F. Ledrappier and X. Wang in [1]:

\begin{theorem}
{ Let $(M^n, g)$ be a closed Riemannian manifold with $Ric \geq -(n-1)$. Let $\tilde M$ be its universal cover,
  then the volume entropy satisfies $v(\tilde M) \leq n -1$.
Moreover, $v(\tilde M) = n -1$ iff $\tilde{M}$ is the standard hyperbolic space with constant curvature $-1$.}
\end{theorem}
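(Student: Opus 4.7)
The inequality $v(\tilde M)\le n-1$ is standard and follows from Bishop--Gromov comparison. The substance is the rigidity. My plan is to convert the entropy equality $v(\tilde M)=n-1$ into the existence of a distinguished ``Busemann-type'' function $b$ on $\tilde M$ satisfying pointwise $|\nabla b|=1$ and $\Delta b = n-1$, then use the Bochner formula and the fact that the Ricci lower bound is exactly $-(n-1)$ to force $\tilde M$ to be $\mathbb{H}^n$.

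\textbf{Step 1: the upper bound.} The Ricci bound on $M$ lifts to $\tilde M$, so Bishop--Gromov gives
\[
\mathrm{vol}\,B_{\tilde M}(x,r) \;\le\; \omega_{n-1}\int_0^r \sinh^{n-1}(t)\,dt \;\sim\; C\,e^{(n-1)r}.
\]
Taking $\log$ and dividing by $r$, $v(\tilde M)\le n-1$.

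\textbf{Step 2: build $b$ from the entropy equality.} Assume $v(\tilde M)=n-1$. For a sequence $x_k\to\infty$ along some ray, set $b_k(y)=d(y,x_k)-d(x_0,x_k)$. These are $1$-Lipschitz with $|\nabla b_k|=1$ a.e.\ and $\Delta b_k\le (n-1)\coth d(\cdot,x_k)$ by the Laplacian comparison theorem. A subsequence converges locally uniformly (and weakly in $W^{1,2}_{\mathrm{loc}}$) to a Busemann function $b$ with $|\nabla b|=1$ and $\Delta b\le n-1$ weakly. The critical point is to promote this inequality to equality: the excess $(n-1)-\Delta b$ is a nonnegative measure, and integrating against suitable cutoffs on a fundamental domain of $M$ and using the entropy equality to eliminate the boundary contribution (the sharpness of volume growth leaves no room for deficit in $\Delta b$) forces $\Delta b = n-1$. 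Alternatively, one can pass through a Patterson--Sullivan conformal density of exponent $v=n-1$ and take $b$ to be $-\tfrac{1}{n-1}\log$ of its Radon--Nikodym cocycle, which gives the same identity.

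\textbf{Step 3: Bochner rigidity.} Given $|\nabla b|\equiv 1$ and $\Delta b\equiv n-1$, Bochner reads
\[
0 \;=\; \tfrac12\Delta|\nabla b|^2 \;=\; |\mathrm{Hess}\,b|^2 + \langle\nabla b,\nabla\Delta b\rangle + \mathrm{Ric}(\nabla b,\nabla b).
\]
Because $\mathrm{Hess}\,b(\nabla b,\cdot)=0$, the Hessian is effectively an $(n-1)\times(n-1)$ symmetric matrix of trace $n-1$, so Cauchy--Schwarz gives $|\mathrm{Hess}\,b|^2\ge n-1$; combined with $\mathrm{Ric}(\nabla b,\nabla b)\ge -(n-1)$ the whole sum is $\ge 0$, and equality in Bochner is forced. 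Hence $\mathrm{Hess}\,b = g - db\otimes db$ and $\mathrm{Ric}(\nabla b,\nabla b)=-(n-1)$ along $\nabla b$. This Hessian identity exhibits $\tilde M$ as a warped product $(\mathbb{R},dt^2)\times_{e^t}\Sigma$ with $\Sigma = \{b=0\}$. Repeating the construction for Busemann functions in enough directions (equivalently, using that the $\pi_1(M)$-action on $\tilde M$ ensures the equality holds in all tangent directions) pins every sectional curvature to $-1$ and gives $\tilde M \cong \mathbb{H}^n$.

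\textbf{Main obstacle.} Step 3 is essentially algebraic once Step 2 is in hand, and Step 1 is a textbook application. The real work is Step 2: deducing the pointwise identity $\Delta b=n-1$ from a purely asymptotic hypothesis on ball volumes. The naive comparison only yields one-sided inequalities, and one must find the right global integration---against a fundamental-domain cutoff or against a Patterson--Sullivan measure---that turns the entropy equality into a rigidity statement for the Laplacian of the Busemann function. I expect this is exactly where the author's ``short'' argument concentrates its cleverness.
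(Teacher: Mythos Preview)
Your overall architecture matches the paper's: the whole point is to manufacture a function $u$ on $\tilde M$ with $|\nabla u|=1$ and $\Delta u=n-1$, after which the paper simply quotes Li--Wang~[2] (this is your Step~3, the Bochner/warped-product argument, with the bounded curvature coming for free since $M$ is closed). Step~1 is identical.

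The gap is precisely where you flag it, and your sketch of Step~2 does not close it. Choosing a Busemann function $b$ along a single ray and then ``integrating against a fundamental-domain cutoff'' does not work as written: $b$ is not $\pi_1$-equivariant, so the boundary terms of $\int_D\Delta b$ over a fundamental domain $D$ do not pair off, and there is no direct way to read the volume entropy from that integral. The Patterson--Sullivan alternative you mention is a genuine route (it is essentially Ledrappier--Wang's original argument), but you have not carried it out, and it requires real machinery (existence of a conformal density at the critical exponent, regularity of the averaged Busemann cocycle, etc.). As stated, Step~2 is a wish, not a proof.

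The paper's device for Step~2 is different and more elementary, and it is worth knowing. First, a pigeonhole on sphere areas produces radii $r_i\to\infty$ at which $A(\partial B(O,r_i+50R))/A(\partial B(O,r_i-50R))\to e^{100(n-1)R}$; integrating by parts over the annulus $A_i=\{r_i-50R\le r\le r_i+50R\}$ then gives $\dashint_{A_i}\Delta r\to n-1$. The key move is to exploit cocompactness: inside each $A_i$ one packs many disjoint balls $B(P_j(i),R)$ centered at deck-translates of a fixed point $P_0$, and a volume-comparison covering argument shows their union occupies a fixed fraction of $\mathrm{vol}(A_i)$, uniformly in $i$. Hence at least one such ball satisfies $\dashint_{B(P_j(i),R)}\Delta r\ge n-1-o(1)$. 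Pulling that ball back to $B(P_0,R)$ by the deck transformation, setting $u_i=r-d(O,P_j(i))$, and passing to a limit $u_R$, the lower bound just obtained and the Laplacian-comparison upper bound squeeze to $\Delta u_R=n-1$ on $B(P_0,R)$; a diagonal argument in $R\to\infty$ produces the global $u$. The point you were missing is that the deck-transformation pullback is what converts an asymptotic average statement about $\Delta r$ far out in $\tilde M$ into a pointwise identity on a fixed ball.
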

\begin{proof}
The inequality $v(\tilde M) \leq n -1$ directly follows from the volume comparison. We have to deal with the equality case.
We shall construct a Busemann function $u$ on $\tilde M$ such that $\Delta u = n -1$ in the distribution sense.
By the result of Li-Wang in [2], we know $\tilde M$ is the hyperbolic space form since $\tilde M$ has bounded curvature.
Now take a fixed $R$ such that $R > 50 diam(M)$. Pick a point $O \in \tilde{M}$ and define $r(x) = d(O, x)$.

\begin{claim}\emph{ There exists a sequence $r_i \to \infty$ so that the area of the geodesic spheres satisfy
 $$\frac{A(\partial(B(O, r_i + 50R)))}{A(\partial (B(O, r_i - 50R)))} \to e^{100(n-1)R}.$$}
\end{claim}

We prove the claim by contradiction. Suppose there exist $r_0 > 100R > 0$ and $\epsilon >0$ such that for any $r >r_0$,
$$\frac{A(\partial(B(O, r + 50R)))}{A(\partial (B(O, r - 50R)))} \leq e^{100(n-1)R} (1 - \epsilon).$$
 By an iteration argument we find that for sufficiently large $r$, $$A(\partial(B(O, r))) \leq C(1-\epsilon)^{\frac{r}{100R}}e^{(n-1)r}$$ where $C$ is a constant independent of $r$.
After the integration, we find that the volume entropy is smaller than $n-1$. This is a contradiction.

\bigskip

We take the sequence $r_i$ in claim 1 and define $$A_i = \{x\in \tilde M|r_i - 50R \leq d(x, O) \leq r_i+50R\}.$$

\begin{claim}\emph{ $\dashint_{A_i} \Delta r \geq n-1-\epsilon(i, R)$ where $\epsilon(i, R) \to 0$ when $i \to \infty$. The symbol $\dashint$ means the average.}
\end{claim}
After integration by parts,  claim 2 follows from claim 1 and Bishop-Gromov's volume comparison.

\bigskip

Given a point $P \in M$, for all preimages of $P$ in $\tilde M$, consider the subset
$P_{j}(i)$ such that $B(P_{j}(i), R) \subseteq A_i$. Let $E_i$ be the maximal set of $P_{j}(i)$ such that
for $j_1 \neq j_2$ in $E_i$,
 $B(P_{j_1}(i), R) \bigcap B(P_{j_2}(i), R) = \Phi$.
Take $F_i = \bigcup\limits_{j \in E_i} B(P_{j}(i), R), G_i = \bigcup\limits_{j \in E_i} B(P_{j}(i), 5R)$.

By Bishop-Gromov's volume comparison, we have $$\frac{vol(F_i)}{vol(G_i)} \geq g(R, n).$$

Now by a standard covering technique, we find that $$G_i = \bigcup\limits_{j \in E_i} B(P_{j}(i), 5R) \supseteq B(O, r_i + 10R)\backslash B(O, r_i-10R).$$

Combining with claim 1, we have $$\frac{vol(G_i)}{vol(A_i)} \geq h(R, n)$$
where $g(R, n), h(R, n)$ are positive functions independent of $i$. Therefore, we have
$$\frac{vol(F_i)}{vol(A_i)} \geq g(R, n)h(R, n).$$

Combining with claim 2 and the Laplacian comparison, we find that for each $i$,
$$\dashint_{F_i}\Delta r \geq n-1 - \frac{\epsilon(R, i)}{g(R, n)h(R, n)} - \delta (i, n)$$
where $\delta (i, n) \to 0$ when $i \to \infty$.

Therefore there exists at least one $j$ in $E_i$ such that
\begin{equation}
\dashint_{B(P_{j}(i), R)}\Delta r \geq n - 1 -\frac{\epsilon(R, i)}{g(R, n)h(R, n)} - \delta (i, n).
\end{equation}

Note that $B(P_{j}(i), R)$ is isometric to $B(P_0, R)$ where $P_0$ is a fixed preimage of $P$ in $\tilde M$. Consider the function $u_i(x) = r(x) - d(O, P_j(i))$ in $B(P_{j}(i), R)$, we pull $u_i$ back to $B(P_0, R)$. Note that $u_i(P_0) = 0$.
Since $u_i$ is a uniformly Lipschitz sequence, we can extract a subsequence so that $u_i \to u_R$ in $B(P_0, R)$.
Now by (1) and the Laplacian comparison, we can easily get
\begin{equation}
\int_{B(P_{0}(i), R)}u_R\Delta \varphi  \geq (n - 1) \int_{B(P_{0}(i), R)}\varphi
\end{equation}
for any $\varphi \in C_0^{\infty}(B(P_{0}, R)), \varphi \geq 0$.

One the other hand, since $u_R$ is a limit of the distance function, the standard Laplacian comparison implies
\begin{equation}
\int_{B(P_{0}(i), R)}u_R\Delta \varphi  \leq (n - 1) \int_{B(P_{0}(i), R)}\varphi
\end{equation}
for any $\varphi \in C_0^{\infty}(B(P_{0}(i), R)), \varphi \geq 0$.

(2) and (3) imply $\Delta u_R = n - 1$ in the distribution sense. Furthermore, since $u_R$ is a limit of the distance function, $|\nabla u_R| = 1$.
Let $R \to \infty$, we can extract a subsequence of $u_R$ so that $u_R \to u$.
Then $u$ is defined on $\tilde M$. It satisfies $|\nabla u|= 1$ and $\Delta u= n - 1$.
According to the argument at the beginning of the proof, $\tilde M$ is the hyperbolic space form.
\end{proof}

Using the same proof, we can prove the following theorem which is also due to F. Ledrappier and X. Wang [1]:
\begin{theorem}
{ Let $M$ be a compact K\"{a}hler manifold with $dim_{\mathbb{C}}M = m$ and $\tilde M$ be its universal cover. If the bisectional curvature $K_{\mathbb{C}} \geq -2$, then the volume entropy satisfies $v \leq 2m$. Moreover, if the equality holds iff $\tilde M$ is the complex hyperbolic space form.}
\end{theorem}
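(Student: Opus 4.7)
The plan is to mimic the structure of the proof of Theorem 1, with the exponent $n-1$ replaced by $2m$ and the real comparison tools replaced by their K\"ahler analogues. The essential new input is a sharp K\"ahler Laplacian comparison under the bisectional curvature bound $K_{\mathbb{C}} \geq -2$: for the distance function $r$ from a fixed point, one has $\Delta r \leq (2m-2)\coth r + 2\coth(2r)$, with equality realized on the complex hyperbolic space form. As $r \to \infty$ the right hand side tends to $2m$, and integration gives $v(\tilde M) \leq 2m$. It is important to use this K\"ahler specific comparison rather than the bare Ricci comparison, since $K_{\mathbb{C}} \geq -2$ only implies $Ric \geq -2(m+1)$, which is too weak to give the sharp constant $2m$ for $m \geq 2$.

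For the equality case $v(\tilde M) = 2m$, I would run the three-claim framework of Theorem 1 verbatim, replacing $(n-1)$ by $2m$ and $e^{100(n-1)R}$ by $e^{200mR}$. Claim 1 is proved by the same iteration contradiction. Claim 2 follows from integration by parts combined with the K\"ahler Bishop-Gromov type volume comparison valid in this curvature regime. The packing and covering step around the preimages $P_j(i) \in \tilde M$ relies only on volume comparison in the annulus $A_i$ and on the cocompactness of the deck action, so it is unchanged. Passing to the limit on a fixed ball $B(P_0,R) \subset \tilde M$ yields a Lipschitz function $u_R$ with $|\nabla u_R|=1$ and $\Delta u_R = 2m$ in the distribution sense; sending $R\to\infty$ along a diagonal subsequence then produces a Busemann type function $u$ on all of $\tilde M$ satisfying $|\nabla u|=1$ and $\Delta u = 2m$.

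To conclude that $\tilde M$ is the complex hyperbolic space form, I would invoke the K\"ahler analogue of the rigidity result of Li-Wang in [2]: a complete K\"ahler manifold with bounded curvature carrying such a Busemann function must be biholomorphically isometric to the complex hyperbolic space form. Since $M$ is compact, $\tilde M$ has bounded curvature and the theorem applies.

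The main obstacle I expect is the sharp K\"ahler Laplacian comparison used at the start. Getting the constant $2m$ in the limit requires splitting a radial Jacobi field into its component along the complex line spanned by $\partial_r$ and $J\partial_r$ and its components perpendicular to that line, and applying the bisectional curvature bound directly in each block rather than passing to a trace. A secondary concern is whether the limiting function $u$ automatically satisfies the extra structure (a particular form of $\operatorname{Hess} u$ involving $J$) required by the K\"ahler version of Li-Wang's rigidity; this should follow from tracking equality in the sharp comparison, but needs verification.
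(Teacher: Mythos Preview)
Your proposal is correct and is precisely the approach the paper takes: the paper's entire proof of Theorem~2 is the sentence ``Using the same proof, we can prove the following theorem,'' so you have in fact supplied more detail than the paper does. The two K\"ahler-specific inputs you single out---the sharp Laplacian comparison $\Delta r \le (2m-2)\coth r + 2\coth(2r)$ under $K_{\mathbb C}\ge -2$ and the K\"ahler version of the Li--Wang rigidity---are exactly the replacements the paper has in mind, and your caveat about recovering the Hessian structure for the limit $u$ is a legitimate technical point that the paper leaves implicit.
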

\begin{remark}
It is not clear to the author whether theorem 2 still holds if we relax the condition to $Ric \geq -2(m+1)$.
\end{remark}

\section*{Acknowledgements}

The author would like to express his deep gratitude to his advisor, Professor Jiaping Wang, for his interest in this problem and useful suggestions.

\end{document}